\newcommand{\R}{\mathbb{R}}
\newcommand{\pr}{\mathbf{P}}
\newcommand{\w}{\mathbf{w}}
\newcommand{\T}{\intercal}
\theoremstyle{plain}
\newtheorem{theorem}{Theorem}
\newtheorem{remark}{Remark}
\icmltitlerunning{Deep Reinforcement Learning: A Convex Optimization Approach}
\begin{document}
	
	\twocolumn[
	\icmltitle{Deep Reinforcement Learning: A Convex Optimization Approach}
	
	
	
	
	\begin{icmlauthorlist}
		\icmlauthor{Ather Gattami $\langle$ atherg@gmail.com $\rangle$ }{}
	\end{icmlauthorlist}
	
	

	\icmlkeywords{Deep Reinforcment Learning, Convex Optimization, Episodic $Q$-Learning}
	
	\vskip 0.3in
	]
	
	
	

	\begin{abstract}
		In this paper, we consider reinforcement learning of nonlinear systems with continuous state and action spaces. We present an episodic learning algorithm, where we for each episode use convex optimization to find a two-layer neural network approximation of the optimal $Q$-function. The convex optimization approach guarantees that the weights calculated at each episode are optimal, with respect to the given sampled states and actions of the current episode. For stable nonlinear systems, we show that the algorithm converges and that the converging parameters of the trained neural network can be made arbitrarily close to the optimal neural network parameters. In particular, if the regularization parameter in the training phase is given by $\rho$, then the parameters of the trained neural network converge to $w$, where the distance between $w$ and the optimal parameters $w^\star$ is bounded by $\mathcal{O}(\rho)$. That is, when the number of episodes goes to infinity, there exists a constant $C$ such that
		\[
		\|w-w^\star\| \le C\rho.
		\]	
		In particular, our algorithm converges arbitrarily close to the optimal neural network parameters as the regularization parameter goes to zero. As a consequence, our algorithm converges fast due to the polynomial-time convergence of convex optimization algorithms.
	\end{abstract}
	
	\section{Introduction}
	\subsection{Background}
	Deep Reinforcement Learning (RL) has been a cornerstone in most recent developments of Artificial Intelligence. One example was  defeating the highest ranked player in the ancient game Go \cite{silver:2017}, which in the 90's was considered a challenge that is hard to crack for the coming 100 years. Another example is the development of chat bots with Large Language Models, based on RL with Human Feedback.
	
	Most of the recent practical progress is related to Markov Decision Processes (MDPs) with discrete state and/or action spaces. The case of continuous state and action spaces is hard in general due to the high complexity of the problem. 
	
	The optimal control in its full generality is given by a dynamical system
	\[
		x_{t+1} = f(x_t, u_t)
	\]
	where $x_t$ represents the state and $u_t$ the controller, at time step $t$. The goal of the controller is to minimize a certain criterion, is to minimize the average cost
	$$
	\lim_{N\rightarrow \infty} \frac{1}{N} \sum_{t=1}^{N} c(x_t, u_t)
	$$
	or the discounted cost
	$$
	\lim_{N\rightarrow \infty}
	\sum_{t=1}^{N} \gamma^{t}c(x_t, u_t)
	$$
	for $\gamma \in (0,1)$, subject to some power constraint on the control signal, $u\in \mathcal{U}$, typically given by a bound on its power, $|u_t|\le c_u$.
	
	Bellman provided a general algorithm to solve the problem of optimal control based on dynamics programming, given by the well known Bellman equation
	\begin{equation*}
		V_t(x) = \min_{u}{\left\{c(x,u) + \gamma V_{t+1}(f(x,u))\right\}}
	\end{equation*}
	The challenge with the above equation is what Bellman referred to as the "curse of dimensionality", where for most of the cases, the dynamic programming solution explodes exponentially in the dimensions of the state and action spaces, and the length of the time horizon. Therefore, most of the known approaches are approximate even for the case where the system parameters and the cost function are known.

	\subsection{Related Work}
	Nonlinear control theory is mainly considered with stabilizing nonlinear systems, relying on different approaches, see for instance \cite{Khalil:2002}. The Bellman equation has been a standard tool for optimal control. For continuous state and action spaces, a straight forward approach is to discretize the state/action spaces and then use existing solutions for discrete MDPs such as different variants of $Q$-learning, see \cite{sutton:1998} for an overview. However, the problem with this approach is that the higher the resolution is, the larger state and action spaces become. This will in turn increase the computational complexity of the problem. 
	To tackle the computational complexity, methods relying on function approximations are used. A relatively simple approach from a complexity point of view is to use linear function approximation \cite{melo:2007}, where convergence is shown, given certain conditions that could be too restrictive. Most recently, function approximation based on Neural Networks have been widely used, due to its success in the case of discrete state and action spaces. However, the current methods suffer from several drawbacks. First, there are no convergence guarantees when the $Q$ function is approximated with a neural network. Second, even if the algorithms converge, it's not clear how far from the optimum they converge to. We refer the reader to \cite{Hasselt:2012} for a more thorough literature review of Reinforcement Learning in continuous state and action spaces. For the case of discounted cost, and discrete state and action spaces, convergence is shown for any number of layers in \cite{sun:2022}. Also, for the case of discounted cost with possibly continuous state spaces and discrete action spaces, the authors in \cite{gaur:2023} introduce a converging algorithm where the value function is approximated by two layer neural networks, relying on the results by \cite{pilanci:2020}, where training two layer neural networks can be transformed to a convex optimization problem.
	
	\begin{algorithm}[t]
		\label{alg}
		\caption{Episodic Learning with Convex Optimization}
		\begin{algorithmic}[1]
			\STATE Input $\gamma, \rho, R$
			\STATE Initialize $u_1, ..., u_T$
			\STATE Sample $D$ by running $u_1, ..., u_T$
			\STATE Initialize $w$
			\STATE Set $w_1 = w$
			\FOR {episode $k = 1, ..., K$:}
			\STATE 	Observe $x_1$
			\STATE Set $u_1 = \arg\min_{u} Q(x_1,u, {w_k})$
			\FOR{($t=1, ..., T$):}
			\STATE Apply $u_t$ and observe $x_{t+1}$
			\STATE Set $u_{t+1} = \arg\min_{u} Q(x_{t+1},u, w_k)$
			\STATE $X_t = 
			\begin{pmatrix}
				1 & x_t^\intercal & u_t^\intercal 
			\end{pmatrix}
			$
			\STATE $y_t = c(x_t,u_t) + \gamma Q(x_{t+1}, u_{t+1}, w_k)$
			\ENDFOR
			\STATE Solve (\ref{cvxopt0}) 
			and obtain the solution $w$ 
			\STATE $w_{k+1} \leftarrow w_{k} + 
			\alpha_k(w - w_{k})$
			\ENDFOR
		\end{algorithmic}
	\end{algorithm}\vspace{5mm}
	\subsection{Contributions}
	Our  main contribution is the introduction of Algorithm 1, where we episodically use convex optimization to find two-layer neural network approximation to the optimal $Q$-function for infinite horizon average and discounted reward cases. The convex optimization approach guarantees that the weights calculated at each episode are optimal, with respect to the given sampled states and actions of the current episodes. We show that the algorithm converges for stable nonlinear systems, and that the converging parameters of the trained neural network can be made arbitrarily close to the optimal neural network parameters. In particular, if the regularization parameter that is used in the convex optimizaiton procedure is given by $\rho$, then the algorithm parameters $w$  distance from the optimal parameters $w^\star$ is bounded by $\mathcal{O}(\rho)$. Hence, there is a constant $C$ such that
	\[
		\|w-w^\star\| \le C\rho.
	\]
	That is, by decreasing the regularization parameter, we can get arbitrarily close to the optimal parameters. As a consequence, our algorithm converges fast due to the polynomial-time convergence of convex optimization algorithms.
	Finally, we demonstrate the performance of our proposed algorithm with numerical experiments for a nonlinear dynamical system under power constraints on the control signal, where we could get very close to the universally optimal controller.

	\subsection{Notation}
	\begin{tabular}{ll}
		$\mathbb{N}$ & The set of positive integers.\\
		$\mathbb{R}$ & The set of real numbers.\\
		$\pr_{S}(~\cdot~)$ & $\pr_S(x)$ is the projection of $x\in \mathcal{X}$\\ & on the space $S$.\\
		$A_{i}$ & $A_{i}$ denotes ith $i$:th row of the matrix $A$.\\
		$[A]_{ij}$ & Denotes the element of the matrix $A$\\
		&  in position $(i.j)$\\
		$A_{i,j}$ & $A_{i,j}=[A]_{ij}$\\
		$\|\cdot\|_F$ & $\|A\|$ denotes the Frobenius norm\\
		& of the matrix $A$.\\
		$\|\cdot\|$ & $\|A\|$ denotes the $\infty$-norm of  the matrix $A$.\\
		$(~\cdot~)_+$ & For a vector $x\in \mathbb{R}$, $(x)_+ = v$, where \\
		& $v_i = \max(x_i, 0)$.\\
		$x_{+}$ & For a sample $x = x_t$, we have $x_{+} = x_{t+1}$. \\
		$w_{-}$ & For a sample $w = w_k$, we have $w_{-} = w_{k-1}$. 
		\end{tabular}

	\section{Episodic Deep Reinforcement Learning with Convex Optimization}
	
	Consider a dynamical system given by
	\begin{equation}
		\begin{aligned}
			x_{t+1}&= f(x_t, u_t)\\
		\end{aligned}
	\end{equation}
	where $x_t\in \mathbb{R}^n$ and $u_t\in \mathbb{R}^m$.
	Suppose that 
	the cost function $c(x, u)$ is non-negative and that it's bounded by some constant $\bar{c} \ge c(x,u)$, for all stabilizing control signals $u$. The goal of the controller $u_t = \mu_t(x_t, x_{t-1}, ..., x_1)$ is to minimize the average cost
	$$
	\lim_{N\rightarrow \infty} \frac{1}{N} \sum_{t=1}^{N} c(x_t, u_t)
	$$
	or the discounted cost
	$$
	\lim_{N\rightarrow \infty}
	\sum_{t=1}^{N} \gamma^{t}c(x_t, u_t)
	$$
	for $\gamma \in (0,1)$, subject to some power constraint on the control signal,  $|u_t|\le c_u$.
	
	Bellman's equation gives the recursive relation 
	\begin{equation}
		V(x) = \min_{u}{\left\{c(x,u) + \gamma V(f(x,u))\right\}}
	\end{equation}
	for $\gamma \in (0,1]$, where the case $\gamma = 1$ corresponds to the average cost case.
	and $V(x)$ is the value function. Alternatively, we can use the expression
		\begin{equation}
				\begin{aligned}
						Q(x,u) 	  &= c(x,u) + \gamma \min_{u_+} Q(x_+, u_+)
					\end{aligned}		
		\end{equation}
The $Q$ function can be approximated by a neural network as in \cite{silver:2017}. We let the activation functions in the neural network be given by the ReLU function 
$$(x)_+ \triangleq \max(x,0).$$  
	
Now suppose $Q$ is represented by a two layer neural network with layer parameters 
$$\w = (w, w'), ~~ w\in \R^{d\times M}, ~~ w'\in \R^{M},$$
\[
\begin{aligned}
	Q(x, u, \w) &= \left([1~~ x^\T~ u^\T]w\right)_+ w' \\
	&= \sum_{i=1}^{M} \left([1~~ x^\T~ u^\T]w_i\right)_+ w'_i.
\end{aligned}
\]

Let $\mathbf{w}^\star$ be the optimal network parameters. Then, they satisfy the Bellman equation

\begin{equation}
	\label{scaler_bellman}
	\begin{aligned}
		Q(x,u, \w^\star) 	  &= c(x,u) + \pr_{\mathbf{Q}}\left(\gamma \min_{u_+} Q(x_+, u_+, \w^\star)\right)
	\end{aligned}		
\end{equation}
where 
\[
	\mathbf{Q} = \left\{Q: Q = (x^\T w)_+ w', w\in \R^{d\times M}, w' \in \R^M, M\in \mathbb{N}  \right\}
\]	
	The training of the $Q$ function parameters over a batch of $T$ time steps is given by minimizing the squared error function with respect to the weights $\mathbf{w}$ of a neural network, that is minimizing the loss
	
	\[
	\min_{\mathbf{w}} \sum_{t=1}^{T} l(\mathbf{w})
	\]
	where	
	\[
	\begin{aligned}
	l(\mathbf{w}) &= \left(c(x,u) + \gamma \min_{u_+} Q(x_+, u_+, \mathbf{w_-}) - Q(x,u,\mathbf{w})\right)^2\\
	&~~~~~~ + \rho_T R(\mathbf{w}),
	\end{aligned}
	\]
	$R(\mathbf{w})$ is some regularization term that can be chosen appropriately, and $\rho_T = \rho T$ for some real number $\rho > 0$.

%
	
	The minimization with respect to the loss function $l(\mathbf{w})$ is not necessarily a convex optimization problem, and it could be hard to find the right neural network approximation. To get around this problem, we will consider a so called episodic setting, which we will describe here. Consider learning to control the dynamical system over $K$ episodes, where each episode has a time horizon of $T$ time steps. The input data will be given by 
	\[
	X = \begin{pmatrix}
		1 & x_1^\intercal & u_1^\intercal \\
		1 & x_2^\intercal & u_2^\intercal \\
		\vdots & \vdots & \vdots\\
		1 & x_{T-1}^\intercal & u_{T-1}^\intercal \\
		1 & x_T^\intercal & u_T^\intercal 
	\end{pmatrix}
	\]
	and output data will be given by
	$$y_t = c(x_t,u_t) + \gamma Q(x_{t+1}, u_{t+1}, \mathbf{w}_k),$$ for $t=1, ..., T$. 
	
	Now, let 
	\[
	c = 
	\begin{pmatrix}
		c(x_1, u_1) \\
		c(x_2, u_2) \\
		\vdots\\
		c(x_T, u_T)
	\end{pmatrix}	
	\] 
	and 
	\[
	Z = \begin{pmatrix}
		1 & x_2^\intercal & u_2^\intercal \\
		1 & x_3^\intercal & u_3^\intercal \\
		\vdots & \vdots & \vdots\\
		1 & x_T^\intercal & u_T^\intercal \\
		1 & x_{T+1}^\intercal & u_{T+1}^\intercal 
	\end{pmatrix}
	\]

	Suppose that  the regularization term $R$ is given by 
	$$
	R(\w) = 
	\|w\|_F^2  + |w'|^2 
	$$

	The optimization problem of training a two layer neural network becomes
	
	\begin{equation}
	\label{2layermin}
	\begin{aligned}
			& \min_{w, w'} \left|(Xw)_+w' - y\right|^2 +  \rho_T \left(\|w\|_F^2 + 	|w'|^2\right) =\\
			& \min_{w, w'} \left|\sum_{i=1}^{M}(Xw_i)_+w_i' - y\right|^2 +  \rho_T \sum_{i=1}^{M}\left(|w_i|^2 + 	|w_i'|^2\right)
	\end{aligned}
	\end{equation}
	where 
	$$w = [w_1~w_2~ \cdots ~w_M],$$
	$$w' = 
	[w'_1~w'_2~ \cdots ~w'_M]^\T,$$ 
	$w_i\in \R^{d}$, $w'_i\in \R$, for $i=1, ..., M$. 
	
%

	Using the framework by Pilanci \textit{et. al.} \cite{pilanci:2020}, the optimization problem given by (\ref{2layermin}) can be transformed to an equivalent convex problem. The authors make the observation that $(Xu)_+ = DXu$ for some diagonal matrix $D$, with zeros and ones on its diagonal. By generating a set of $D$ matrices for each episode, and then optimize for the weights $\mathbf{w}$, we can get and equivalent convex optimization problem for training the weights of the neural network.

	The equivalent convex optimization problem is given by
	\begin{equation}
		\label{cvxopt0}
		\begin{aligned}
			\min_{w\in \mathcal{W}} 
			&~~ \left|\sum_{p=1}^{P}D_p X(w_{1,p}-w_{2,p}) - y\right|^2\\ &\hspace{1mm}+\rho_T \sum_{p=1}^{P}(|w_{1,p}| + |w_{2,p}|) \\
		\end{aligned}
	\end{equation}
	where $\mathcal{W}$ is a linear space given by 
		\[
		\begin{aligned}
			\mathcal{W} &= \left\{w:  ~ 0 \le (2D_p-1)Xw_{p},~ p = 1, ..., P  
			\right\}
		\end{aligned}
		\]
	for some positive integer $P$, which is known to be bounded by $\mathcal{O}((\frac{n}{r})^r)$, $r = \textup{rank}(X)$, and  
	$w = (w_1, ..., w_P)\in \mathbb{R}^{2(m+n+1)\times P}$. Once we solve for the weights $w$ in (\ref{cvxopt0}), we get the equivalent neural network  \cite{pilanci:2020}
	\begin{equation}
		Q(x,u,w) = \sum_{p=1}^{P}  (w_{1,p}^\T x)_+ - ( w_{2,p}^\T x)_+
	\end{equation} 
	
	Note that the optimization problem in (\ref{cvxopt0}) is equivalent to 
	\begin{equation}
		\label{cvxopt1}
		\begin{aligned}
			\min_{\substack{w\\ v > 0}} 
			&~~ \left|\sum_{p=1}^{P}D_p X(v_{1,p}w_{1,p}-v_{2,p}w_{2,p}) - y\right|^2\\
			&~+\rho_T \sum_{p=1}^{P}(|v_{1,p}w_{1,p}| + |v_{2,p}w_{2,p}|) \\
			\textup{subject to} 
			&~~ 0 \le (2D_p-1)Xw_{1, p},
			\hspace{2mm} p=1, ..., P\\
			&~~ 0 \le (2D_p-1)Xw_{2, p},
			\hspace{2mm} p=1, ..., P.
		\end{aligned}
	\end{equation}
	
	The learning algorithm we propose is given by Algorithm 1. 
	The main theoretical result of the paper provides conditions for which Algorithm 1 converges. Before stating our theorem, let $v_{i,p}$ be the optimal solution in (\ref{cvxopt1}), for $p = 1, .., P$, $i=1,2$, and introduce
	\[
	F_1 = \begin{bmatrix}
		\frac{\rho}{2v_{1,1}^2} & 0 & \cdots & 0\\
		0 & \frac{\rho}{2v^2_{1,2}} & \cdots & 0\\
		\vdots & \vdots & \ddots & \vdots\\
		0 & 0 & \cdots & \frac{\rho}{2v_{1,p}^2}
	\end{bmatrix},
	\]
	\[
	F_2 = \begin{bmatrix}
		\frac{\rho}{2v_{2,1}^2} & 0 & \cdots & 0\\
		0 & \frac{\rho}{2v^2_{2,2}} & \cdots & 0\\
		\vdots & \vdots & \ddots & \vdots\\
		0 & 0 & \cdots & \frac{\rho}{2v_{2,p}^2}
	\end{bmatrix},
	\]
	and
	\[
	F = \begin{bmatrix}
		F_1 & 0\\
		0 & F_2
	\end{bmatrix}.
	\]
	Now we are ready to state our main result.

	\begin{theorem}
		\label{main}
		Consider Algorithm 1 and let $\lambda$ and $\beta$ be positive real numbers such that 
		\[
		\begin{aligned}
			\lambda I \preceq \frac{1}{T}\left(
			\sum_{t=1}^{T}
			\begin{bmatrix}
					X_t^\T\\
					-X_t^\T
			\end{bmatrix}
			\begin{bmatrix}
				X_t &
				-X_t
			\end{bmatrix} + F \right)
		\end{aligned}
		\]
		and
		\[
			 |u_t|^2\le \beta,~~ |x_t|^2 \le \beta, ~~~~\textup{for } t = 1, ..., T.
		\]
		 Further, let $\alpha_k$ be either a positive real constant, $\alpha_k = \alpha < 1$ for all $k\in \mathbb{N}$, or such that $\alpha_k = 1/k$.
		 If 
		\[
			T > \frac{3}{2}\left(\frac{\gamma \beta}{\lambda}\right)^2,
		\]
		then for some positive real constant $C$, $w_k\rightarrow w$ where 
		\[
		\|w - w^\star\| \le C \rho
		\] 
		and $w^\star$ are the optimal neural network parameters satisfying the Bellman equation
	\begin{equation*}
		\begin{aligned}
			Q(x,u, w^\star) 	  &= c(x,u) + \pr_{\mathbf{Q}}\left(\gamma \min_{u_+} Q(x_+, u_+, w^\star)\right)
		\end{aligned}		
	\end{equation*}
		\end{theorem}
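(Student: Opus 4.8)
The plan is to read Algorithm 1 as a damped fixed-point iteration and to combine a contraction estimate with a regularization-perturbation bound. Define the per-episode operator $G$ by letting $G(\w_k)$ be the (unique) minimizer $w$ returned by the convex program (\ref{cvxopt0}) when the targets are formed from $\w_k$, i.e. $y_t = c(x_t,u_t) + \gamma \min_{u_+} Q(x_{t+1},u_{t+1},\w_k)$. With this notation the update in Algorithm 1 reads $\w_{k+1} = \w_k + \alpha_k\big(G(\w_k)-\w_k\big)$, a Krasnoselskii--Mann / Robbins--Monro averaging of $G$. The theorem then reduces to two facts: (i) $G$ is a contraction with modulus $L<1$ under the hypothesis $T > \tfrac{3}{2}(\gamma\beta/\lambda)^2$; and (ii) the unique fixed point $w$ of $G$ lies within $\mathcal{O}(\rho)$ of the exact projected-Bellman parameters $w^\star$.

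To prove (i) I would factor $G$ through the target vector. First, for fixed activation patterns $D_p$ the program (\ref{cvxopt0}) (equivalently (\ref{cvxopt1})) is a regularized least-squares problem over the polyhedral cone $\mathcal{W}$, and the stated bound $\lambda I \preceq \tfrac1T\big(\sum_t [X_t^\T,\,-X_t^\T]^\T[X_t,\,-X_t] + F\big)$ says precisely that its objective is $\lambda$-strongly convex after the $1/T$ normalization, where $F$ encodes the curvature contributed by the regularizer through the optimal scalings $v_{i,p}$. Strong convexity makes the minimizer single-valued and, via the KKT/variational-inequality stability estimate, Lipschitz in the target $y$ with constant of order $\tfrac{1}{\lambda T}\|A\|$, where $A=[\,D_1X\;-\!D_1X\;\cdots\,]$. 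Second, since $Q(\cdot,\cdot,\w_k)$ is piecewise linear in the effective weights with feature rows bounded through $|x_t|^2,|u_t|^2\le\beta$, and the pointwise minimum over $u_+$ is $1$-Lipschitz in the parameters (by the envelope/Danskin property, so the jumps of the greedy $\arg\min_{u}$ are harmless), the map $\w_k\mapsto y$ is Lipschitz with constant of order $\gamma\sqrt{\beta T}$. Composing the two estimates yields $\mathrm{Lip}(G) \le L$ with $L$ of order $\gamma\beta/(\lambda\sqrt{T})$; carrying the numerical constants through gives $L^2 \le \tfrac{3}{2}(\gamma\beta)^2/(\lambda^2 T)$, so the threshold $T>\tfrac32(\gamma\beta/\lambda)^2$ is exactly the condition that forces $L<1$.

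Given the contraction, convergence of the averaged iterates follows for both step-size rules. For a constant step $\alpha_k=\alpha<1$ the map $w\mapsto (1-\alpha)w+\alpha G(w)$ is itself a contraction (a convex combination of the identity and the $L$-contraction $G$), so the Banach fixed-point theorem supplies a unique fixed point $w$ and geometric convergence $\w_k\to w$. For $\alpha_k=1/k$ I would argue directly: writing $e_k=\w_k-w$ and using $\|G(\w_k)-w\|=\|G(\w_k)-G(w)\|\le L\|e_k\|$ gives $\|e_{k+1}\| \le \big(1-\alpha_k(1-L)\big)\|e_k\|$, and since $\sum_k \alpha_k=\infty$ the product $\prod_k\big(1-\alpha_k(1-L)\big)$ diverges to $0$, hence $e_k\to0$. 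In both cases the limit is the fixed point $w$ of $G$.

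It remains to prove (ii). The fixed point $w$ is, by construction, the minimizer of the $\rho_T$-regularized projected-Bellman residual, while $w^\star$ solves the same problem with the regularizer removed ($\rho=0$); because $\rho_T=\rho T$ and the residual carries the $1/T$ normalization underlying the $\lambda$-strong-convexity bound, a standard regularization-sensitivity estimate for strongly convex problems of the form $\|w-w^\star\| \le \tfrac{\rho}{\lambda}\,\|\nabla R(w^\star)\|$ gives $\|w-w^\star\|\le C\rho$ with $C$ depending on $\lambda$, $\beta$ and the regularizer, which is the claimed bound. The step I expect to be the main obstacle is the contraction estimate for $G$, and specifically controlling the dependence of the sampled feature matrix $X$, the ReLU patterns $D_p$, and the feasible cone $\mathcal{W}$ on the current weights $\w_k$: because each episode re-runs the greedy policy induced by $\w_k$, these objects are themselves functions of $\w_k$, and one must invoke stability of the closed-loop system (which furnishes the uniform bounds $|x_t|^2,|u_t|^2\le\beta$) together with Lipschitz continuity of the greedy value to ensure the composed operator is genuinely a contraction rather than merely nonexpansive. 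Establishing that the activation patterns can be treated as locally fixed near the fixed point, or otherwise absorbing their variation into the constants, is the delicate part of the argument.
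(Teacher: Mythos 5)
Your proposal follows essentially the same route as the paper's proof: Algorithm~1 is read as a damped fixed-point iteration of the per-episode regularized least-squares minimizer over the cone $\mathcal{W}$, the composition of the solution map's Lipschitz constant in $y$ (order $\sqrt{\beta}/(\lambda\sqrt{T})$ via the strong-convexity bound involving $F$) with the Lipschitz constant of $\w_k \mapsto y$ (order $\gamma\sqrt{\beta}$) yields exactly the contraction modulus $\sqrt{3\gamma^2\beta^2/(2\lambda^2 T)} < 1$, convergence is handled separately for $\alpha_k=\alpha$ and $\alpha_k=1/k$, and the $\mathcal{O}(\rho)$ offset of the fixed point from $w^\star$ comes from the regularization term $\Lambda^{-1}Fw^\star$. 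The only substantive difference is that you explicitly flag the dependence of $X$, the activation patterns $D_p$, and $\mathcal{W}$ on the current iterate as the delicate step --- a point the paper's proof silently treats as fixed across episodes rather than resolving.
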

	\begin{proof} The proof of this theorem is provided in a later section.

\begin{remark} 
	Theorem \ref{main} shows that if we consider a long enough horizon (which depends on the problem parameters), we get arbitrarily close to the optimal parameters when we let $\rho$ go to zero. The dependency on $\rho$ is surprising since the regularization causes our solution to deviate from the optimal parameters, where they would coincide if $\rho = 0$. The regularization parameter is however strictly positive since this assumption is needed in order to get the equivalent transformation to a convex optimization problem.
\end{remark}

	\begin{remark} 
			Note that Algorithm 1 above is useful for both cases when the system parameters are known as well as when they are unknown. In the case of known parameters, it's hard in general to have a closed form solution for the controller. Training a $Q$-function based on convex optimization is practical way to find a controller that is near optimal. In the case of unknown system parameters and cost function, we can just use the measured values of $c(x,u)$ and the state $x$. It would be interesting to compare both results when assuming known and unknown system parameters.  
	\end{remark}
		
\section{Numerical Experiment}
In this section we will consider reinforcement learning for nonlinear dynamical system given by
\[
	x_{t+1} = 0.9x_t^2 + 0.1u_t
\]	
with a cost function given by $$c(x,u) = x^2 + (0.1u-2x)^2$$
and constraint $|u_t|\le 5$. for simplicity, We restrict the training to initial states $x_0\in [0,1]$. 
Since it's numerically (NP-) hard to find the optimal stationary (time-invariant) controller, even if  we have full knowledge of the system model and cost function, we solve the finite horizon problem using dynamic programming and gridding of the state space applied to the Bellman equation. Then, we deduce the optimal time-varying controller. This would provide a lower bound to the cost using a stationary controller. In particular we verify the cost over a short horizon of 5 time steps, that is $T=5$. The number of episodes was set  $K=1000$ to train a two layer neural network that approximates the optimal $Q$-function, and the regularization parameter is set to $\rho = 10^{-4}$. Table \ref{table} summarizes the results for different initial states. We can see that the neural networks found by the convex optimization based episodic learning algorithm presented in this paper gets very close to the lower bound of the optimal controller.

\begin{table}[t]
	\label{table}
	\centering
	\begin{tabular}{lcc}
		\hline
		Initial State & Lower Bound & Convex Optimization \\ \hline
		0.25 & 0.346 & 0.364 \\
		0.50 & 1.493 & 1.548 \\
		0.75 &  0.315 &  0.364\\
		1.00 &  8.140 &  9.981\\ \hline
	\end{tabular}
	\caption{The table shows the performance of the trained neural network, compared to the lower bound given by the optimal finite horizon controller found by numerically solving the Bellman equation. We considered a time horizon $T=5$ for the time-varying controller, and tested different initial states $x_0$ in the interval $[0,1]$. We see clearly that the convex optimization approach presented in this paper gets very close to the optimal solution after $1000$ episodes.}
	\label{tab:your_label}
\end{table}

\section{Proof of Theorem \ref{main}}	
First note that
\[
y = c + \sum_{p=1}^{P}D'_p Z(w_{1,p,k}-w_{2,p,k})
\] 
where $w_k$ are the parameters used in episode $k$ and
\[
[D'_p]_{tt} =  
\left\{
\begin{array}{ll}
	[D_p]_{(t+1)(t+1)}  & \textup{for } 1\le t \le T-1 \\
	0					&  \textup{for } t = T
\end{array}
\right. \\
\]

Let $\mathcal{Q}$ be the space of functions given by 
\[
\begin{aligned}
	&\mathcal{Q} = \Big\{Q:~  Q = \sum_{p=1}^{P}D_p X(w_{1,p}-w_{2,p}), \\
	&\hspace{1cm} 
	 ~ 0 \le (2D_p-1)Xw_{i,p}, i=1,2
	\Big\}.
\end{aligned}
\]
The optimal weights $w^\star$ must satisfy the Bellman equation
\begin{equation*}
	\label{bellman}
	\begin{aligned}
		&\sum_{p=1}^{P}D_p X(w^\star_{1,p}-w^\star_{2,p}) =\\
		& c +  
		\pr_{\mathcal{Q}}\left(\gamma \min_u\sum_{p=1}^{P}D'_p Z(w^\star_{1,p}-w^\star_{2,p})\right)
	\end{aligned}
\end{equation*}
%
%

Since 
$$
|w_{i,p}v_{i,p}| \le \frac{1}{2}\left(|w_{i,p}|^2 + v_{i,p}^2\right),
$$
with equality if and only if $|w_{p}| = v_p$, we see that (\ref{cvxopt1}) is equivalent to
\begin{equation}
	\label{cvxopt2}
	\begin{aligned}
		\min_{\substack{w\\ v > 0}} 
		&~~ \left|\sum_{p=1}^{P}D_p X(v_{1,p}w_{1,p}-v_{2,p}w_{2,p}) - y\right|^2\\
		&~~~~ + \frac{\rho_T}{2}\sum_{p=1}^{P}\frac{1}{2}\left(|w_{i,p}|^2 + v_{i,p}^2\right)\\
		\textup{subject to} 
		&~ 0 \le (2D_p-1)Xw_{i,p},\hspace{2mm} p=1, ..., P,~~i=1,2
	\end{aligned}
\end{equation}
Now the transformation $w_{i,p}v_{i,p}\rightarrow w_{i,p}$ implies the equivalent optimization problem
\begin{equation}
	\label{cvxopt3}
	\begin{aligned}
		\min_{v > 0} \min_{w} 
		&~~ \left|\sum_{p=1}^{P}D_p X(w_{1,p}-w_{2,p})-y\right|^2\\
		&~ + \frac{\rho_T}{2}\sum_{p=1}^{P}\left(|w_{i,p}/v_{i,p}|^2 + v_{i,p}^2\right) \\
		\textup{subject to} 
		&~~ 0 \le (2D_p-1)Xw_{1, p},
		\hspace{2mm} p=1, ..., P\\
		&~~ 0 \le (2D_p-1)Xw_{2, p},
		\hspace{2mm} p=1, ..., P.
	\end{aligned}
\end{equation}

Introduce 
\[
\begin{aligned}
	D_X &= \left[D_1X~ D_2X~ \cdots ~ D_pX\right]\\
	D_Z &= \left[D'_1Z~ D'_2Z~ \cdots ~ D'_pZ\right],		
\end{aligned}
\]

\[
H = 
\begin{bmatrix}
	I 			 & -D_X							& D_X\\ 
	-D^\T _X	& D_X^\T D_X + F_1 	 	  & -D_X^\T D_X \\
	D^\T _X 	& -D_X^\T D_X & D_X^\T D_X + F_2
\end{bmatrix},
\]
and
\[
\begin{aligned}
	\Lambda &= 
	\begin{bmatrix}
		D_X^\T D_X + F_1	& -D_X^\T D_X \\ 
		-D_X^\T D_X 			 & D_X^\T D_X + F_2
	\end{bmatrix}\\
	L &= \Lambda^{-1} 
	\begin{bmatrix}
		D^\T _X\\ 
		-D^\T _X 
	\end{bmatrix},\\
	M &= I - L^\T \Lambda^{-1} L.
\end{aligned}
\]
The objective	function in (\ref{cvxopt3}) is given by
\[
\begin{aligned}
	&\left|\sum_{p=1}^{P}D_p X(w_{1,p}-w_{2,p})-y\right|^2\\
	& + \frac{\rho_T}{2}\sum_{p=1}^{P}\left(|w_{1,p}/v_{1,p}|^2 + |w_{2,p}/v_{2,p}|^2 + v_{1,p}^2 + v_{2,p}^2\right) \\
	&= \left|\sum_{p=1}^{P}D_p X(w_{1,p}-w_{2,p}) - c \right. \\
	&~~~~\left. -\gamma \sum_{p=1}^{P}D'_p Z(w^k_{1,p}-w^k_{2,p}) \right|^2\\
	& ~~~~+ \frac{\rho_T}{2}\sum_{p=1}^{P}\left(|w_{1,p}/v_{1,p}|^2 + |w_{2,p}/v_{2,p}|^2 + v_{1,p}^2 + v_{2,p}^2\right) \\
	&=  \left|\sum_{p=1}^{P}D_p X(w_{1,p}-w_{2,p}) \right.\\
	&~~~~ \left. - c - \pr_{\mathcal{Q}}\left( \gamma\sum_{p=1}^{P}D'_p Z(w^k_{1,p}-w^k_{2,p})\right) \right|^2\\
	&~~~~+ \left|(I-\pr_{\mathcal{Q}})\left(\gamma\sum_{p=1}^{P}D'_p Z(w^k_{1,p}-w^k_{2,p})\right)\right|^2\\
	&~~~~ + w_1^\T F_1 w_1 + w_2^\T F_2 w_2 + \frac{\rho_T}{2}\sum_{p=1}^{P} |v_p|^2\\
	&= \left| D_X(w_1-w_2) - \bar{y}\right|^2\\
	&~~~~ +\left|(I-\pr_{\mathcal{Q}})\left(\gamma\sum_{p=1}^{P}D'_p Z(w^k_{1,p}-w^k_{2,p})\right)\right|^2\\
	&~~~~ + w_1^\T F_1 w_1 + w_2^\T F_2 w_2 + \frac{\rho_T}{2}\sum_{p=1}^{P} |v_p|^2\\
	&= 	\begin{bmatrix}
		\bar{y}\\ 
		w_1\\
		w_2
	\end{bmatrix}^\T
	H
	\begin{bmatrix}
		\bar{y}\\
		w_1\\
		w_2
	\end{bmatrix}	+  \frac{\rho}{2}\sum_{p=1}^{P} v_p^2 \\
\end{aligned}
\]

Using the Bellman equation (\ref{bellman}), we get
\[
\begin{aligned}
	\bar{y} &= c + \pr_{\mathcal{Q}}(\gamma D_Z(w_{1,k} - w_{2,k}))\\
	&= D_X(w_1^\star-w_2^\star) -  \pr_{\mathcal{Q}}(\gamma D_Z(w_1^\star-w_2^\star))\\ 
	&~~~~+ \pr_{\mathcal{Q}}(\gamma D_Z(w_{1,k} - w_{2,k}))\\
	&= [D_X~~ -D_X]w^\star + \gamma \pr_{\mathcal{Q}}([D_Z~~ -D_Z]w_k)\\
	&~~~~- \pr_{\mathcal{Q}}(\gamma[D_Z~~ -D_Z]w^\star)
\end{aligned}
\]
Then, completion of squares gives the relation
\[
\begin{aligned}
	&\begin{bmatrix}
		\bar{y}\\ 
		w
	\end{bmatrix}^\T
	H
	\begin{bmatrix}
		\bar{y}\\ 
		w
	\end{bmatrix} =
	\left(w - L\bar{y}
	\right)^\T  
	\Lambda
	\left(w - L\bar{y}\right)+\bar{y}^\T M\bar{y}
\end{aligned}
\]

Since $\Lambda \prec 0$, we may define the norm
\[
\|w\|_{\Lambda} \triangleq w^\T \Lambda w.
\]

Let $\mathcal{W}$ be the linear space given by 
\[
\begin{aligned}
	\mathcal{W} &= \left\{w: ~ 0 \le (2D_p-1)Xw_{p},~ p = 1, ..., P\right\}.
\end{aligned}
\]
Standard Hilbert Space Theory implies that the optimal solution $w^\star$ to 
\[
\min_{w\in \mathcal{W}} \|w-L\bar{y}\|_{\Lambda}
\]
is the projection of $Lv$ on $\mathcal{W}$ under the norm $\|\cdot \|_{\Lambda}$, that is
\[
w_k^\star = \pr_{\mathcal{W}}(L\bar{y}).
\]

Thus,
\[
\begin{aligned}
	w^\star_k &= 
	\pr_{\mathcal{W}}\Big(\Lambda^{-1}\begin{bmatrix}
		D^\T _X\\ 
		-D^\T _X 
	\end{bmatrix}\big( c  +  \pr_{\mathcal{Q}}\left(\gamma[D_Z~~ -D_Z]w_k\right) \big)\Big)
\end{aligned}
\]
and
{\small
	\[
	\begin{aligned}
		w^\star_k  &- w^\star\\
		&= \pr_{\mathcal{W}}\Big(\Lambda^{-1}
		\begin{bmatrix}
			D^\T _X\\ 
			-D^\T _X 
		\end{bmatrix}
		\big( c  + \pr_{\mathcal{Q}}\left(\gamma [D_Z~~ -D_Z]w_k\right)\big)  - w^\star\Big) \\
		&= \pr_{\mathcal{W}}\Big(\Lambda^{-1}
		\begin{bmatrix}
			D^\T _X\\ 
			-D^\T _X 
		\end{bmatrix}
		\big( c  + \pr_{\mathcal{Q}}\left(\gamma [D_Z~~ -D_Z]w_k\right)\big)\\
		&~~~~  - \Lambda^{-1}\left(
		\begin{bmatrix}
			D^\T _X\\ 
			-D^\T _X 
		\end{bmatrix}
		\begin{bmatrix}
			D_X   & -D_X
		\end{bmatrix}
		+ F
		\right)
		w^\star\Big) \\
		&= \pr_{\mathcal{W}}\Big(\Lambda^{-1}
		\begin{bmatrix}
			D^\T _X\\ 
			-D^\T _X 
		\end{bmatrix}
		\big(c + \pr_{\mathcal{Q}}\left(\gamma[D_Z~~ -D_Z]w_k\right)\\ 
		&~~~~ - [D_X~~ -D_X]w^\star\big) - \Lambda^{-1}Fw^\star \Big)\\
		&= \pr_{\mathcal{W}}\Big(\Lambda^{-1}
		\begin{bmatrix}
			D^\T _X\\ 
			-D^\T _X 
		\end{bmatrix}
		\big(\pr_{\mathcal{Q}}\left(\gamma[D_Z~~ -D_Z]\big(w_k-w^\star\big)\right)\\ 
		&\hspace{1.5cm} - \Lambda^{-1}Fw^\star \Big)
	\end{aligned}
	\]
}

Now the update rule for $w_k$ implies that
\[
w_{k+1} - w^\star  = (1-\alpha_k)(w_k - w^\star ) + \alpha_k (w_k^\star - w^\star ) 
\]
and
\[
\begin{aligned}
	&w_{k+1} - w^\star\\
	& = (1-\alpha_k)(w_k - w^\star ) + \alpha_k (w_k^\star - w^\star ) \\
	&= (1-\alpha_k )(w_k - w^\star ) \\ 
	&+ \alpha_k  \pr_{\mathcal{W}}\Big(\Lambda^{-1}
	\begin{bmatrix}
		D^\T _X\\ 
		-D^\T _X 
	\end{bmatrix}
	\big(\pr_{\mathcal{Q}}\left(\gamma[D_Z~~ -D_Z]\big(w_k-w^\star\big)\right)\\ 
	&\hspace{1.5cm} - \Lambda^{-1}Fw^\star \Big)
\end{aligned}
\]

Introduce
\[
A = 	
\begin{bmatrix}
	D_X   & -D_X
\end{bmatrix},
\]
Then,
\[
\begin{aligned}
	A^\T A &=
	\begin{bmatrix}
		D^\T_X \\ 
		-D^\T_X
	\end{bmatrix}
	\begin{bmatrix}
		D_X   & -D_X
	\end{bmatrix} \\
	&= 
	\begin{bmatrix}
		D_X^\T   D_X & -D_X^\T   D_X\\
		-D_X^\T   D_X & D_X^\T   D_X
	\end{bmatrix} 
\end{aligned}
\]
\[
\begin{aligned}
	A A^\T &=
	\begin{bmatrix}
		D_X   & -D_X
	\end{bmatrix}
	\begin{bmatrix}
		D^\T_X \\ 
		-D^\T_X
	\end{bmatrix} \\
	&= 2  D_X D_X^\T  
\end{aligned}
\]
Also, note that	
\[
\begin{aligned}
	A^\T  A + F
	&= \sum_{t=1}^T\sum_{p=1}^{P}[D_p]_{tt}
	\begin{bmatrix}
		X_t^\T\\
		-X_t^\T
	\end{bmatrix}
	\begin{bmatrix}
		X_t & -X_t
	\end{bmatrix} + F\\
	&\succeq  
	\sum_{t=1}^T\sum_{p=1}^{P}
	\begin{bmatrix}
		X_t^\T\\
		-X_t^\T
	\end{bmatrix}
	\begin{bmatrix}
		X_t &
		-X_t
	\end{bmatrix} + F\\
	&\succeq 	T\cdot \lambda I 
\end{aligned}
\]

and
\begin{equation}
	\begin{aligned}
		\left\|\Lambda^{-1}F\right\| 
		&=\left\|(A^\T A+F)^{-1}F\right\| \\
		&\le \left\|(T\cdot\lambda I )^{-1}F\right\|\\
		&\le \left\|(T\cdot\lambda I)^{-1}\rho_T f_{\textup{max}}\right\|\\
		&\le \frac{\rho_T f_{\textup{max}}}{\lambda T}\\
		&= \frac{\rho f_{\textup{max}}}{\lambda}
	\end{aligned}
\end{equation}

Now we have that	
\[
\begin{aligned}
	(A^\T A + F)^{-1}A^\T & A(A^\T A + F)^{-1} \preceq (A^\T A + F)^{-1}
\end{aligned}
\]

Introduce 
\[
\begin{aligned}
	f_{\textup{min}} &= \min_{i,p} \left\{\frac{1}{v_{i,p}}\right\}\\  
	f_{\textup{max}} &= \max_{i,p} \left\{\frac{1}{v_{i,p}}\right\},
\end{aligned}
\] 
and note that for any $f\in (0, \infty)$, we have the relation
\[
\begin{aligned}
	(A^\T A + fI)^{-1}A^\T &=  A^\T(A A^\T  + fI)^{-1} 
\end{aligned}
\]
Thus,
\begin{equation}
	\label{sys}
	\begin{aligned}
		\left\|\Lambda^{-1}
		\begin{bmatrix}
			D^\T _X\\ 
			-D^\T _X 
		\end{bmatrix}
		\right\|
		&=\left\| (A^\T A+F)^{-1}A^\T \right\| \\
		& \le\left\|(A^\T A+\rho_T f_{\textup{min}}I)^{-1}A^\T\right\|  \\
		&= \left\|A^\T(AA^\T +\rho_T  f_{\textup{min}}I)^{-1}\right\|  \\
		&= \left\|
		\begin{bmatrix}
			D^\T _X\\ 
			-D^\T _X 
		\end{bmatrix}
		(2D_X D_X^\T + \rho_T  f_{\textup{min}}I)^{-1}
		\right\| \\
		&= \left\|
		D^\T _X (2D_X D_X^\T + \rho_T  f_{\textup{min}}I)^{-1}
		\right\| \\
		&= \left\|
		(2D_X^\T D_X  + \rho_T  f_{\textup{min}}I)^{-1}D^\T _X
		\right\|  \\
		&\le \left\|
		(2D_X^\T D_X  + \rho_T  f_{\textup{min}}I)^{-1}
		\right\| \|D_X^\T\| \\
		&\le\left\|(2T\cdot\lambda I+\rho_T  f_{\textup{min}}I)^{-1}\right\| \cdot \sqrt{T\beta} \\
		&=\frac{\sqrt{T\beta}}{2T\lambda + \rho_T  f_{\textup{min}}}\\
		&\le \sqrt{\frac{\beta}{4\lambda^2 T}}\\
	\end{aligned}
\end{equation}
Thus,
{\small
	\[
	\begin{aligned}
		&\|w_{k+1} - w^\star\| \\
		&\le (1-\alpha_k )\|w_k - w^\star\|\\ 
		&+ \alpha_k  \left\|\pr_{\mathcal{W}}\Big( \Lambda^{-1}
		\begin{bmatrix}
			D^\T _X\\ 
			-D^\T _X 
		\end{bmatrix}
		\pr_{\mathcal{Q}}\left(\gamma[D_Z~~ -D_Z]\big(w_k-w^\star\big)\right)\Big) \right\|\\
		&+ \alpha_k \left\|\pr_{\mathcal{W}}\left(\Lambda^{-1} F w^\star\right) \right\|\\
		&\le (1-\alpha_k )\|w_k - w^\star\|\\ 
		&+ \alpha_k  \left\|\pr_{\mathcal{W}}\Big( \Lambda^{-1}
		\begin{bmatrix}
			D^\T _X\\ 
			-D^\T _X 
		\end{bmatrix}\Big)\right\|
		\left\|\pr_{\mathcal{Q}}\left(\gamma[D_Z~~ -D_Z]\right)\right\| \left\|w_k-w^\star \right\|\\
		&  + \alpha_k \left\|\pr_{\mathcal{W}}\left(\Lambda^{-1} F w^\star\right) \right\|\\
		&\le (1-\alpha_k )\|w_k - w^\star\|\\ 
		&+ \alpha_k  \left\|\Lambda^{-1}
		\begin{bmatrix}
			D^\T _X\\ 
			-D^\T _X 
		\end{bmatrix}\right\|
		\left\|\gamma[D_Z~~ -D_Z]\right\| \left\|w_k-w^\star \right\|\\
		&  + \alpha_k \left\|\Lambda^{-1} F w^\star \right\|\\
		&\le (1-\alpha_k )\|w_k - w^\star\|\\ 
		&+ \alpha_k  (\gamma \sqrt{6\beta}) \left\|\Lambda^{-1}
		\begin{bmatrix}
			D^\T _X\\ 
			-D^\T _X 
		\end{bmatrix}\right\|
		\left\|w_k-w^\star \right\|\\
		&  + \alpha_k \left\|\Lambda^{-1} F w^\star \right\|\\
		&\le (1-\alpha_k )\|w_k - w^\star\| + \alpha_k  (\gamma \sqrt{6\beta}) \sqrt{\frac{\beta}{4\lambda^2 T}}
		\left\|w_k-w^\star \right\|\\
		&  + \alpha_k \left\|\Lambda^{-1} F w^\star \right\|\\
		&= (1-\alpha_k )\|w_k - w^\star\| + \alpha_k  
		\sqrt{ \frac{3\gamma^2 \beta^2}{2\lambda^2 T}}
		\left\|w_k-w^\star \right\|\\
		&  + \alpha_k \left\|\Lambda^{-1} F w^\star \right\|\\
		&\le \left(1-\alpha_k  + \alpha_k  
		\sqrt{ \frac{3\gamma^2 \beta^2}{2\lambda^2 T}}\right)
		\left\|w_k-w^\star \right\|
		+ \alpha_k \frac{\rho_T f_{\textup{max}}}{\lambda T}\|w^\star\|\\
		&= \left(1-\alpha_k  + \alpha_k  
		\sqrt{ \frac{3\gamma^2 \beta^2}{2\lambda^2 T}}\right)
		\left\|w_k-w^\star \right\|
		+ \alpha_k \frac{\rho f_{\textup{max}}}{\lambda}\|w^\star\|  
	\end{aligned}
	\]
}	
The inequality
\[
T > \frac{3}{2}\left(\frac{\gamma \beta}{\lambda}\right)^2
\]
implies that
\[
\frac{3\gamma^2 \beta^2}{2\lambda^2 T} < 1
\]
Introduce 
\[
\mu = 1- \sqrt{\frac{3\gamma^2 \beta^2}{2\lambda^2 T}} < 1
\]
Since $\alpha_k \le 1$, we have that
$$1-\mu \alpha_k  < 1.$$
For the case where $\alpha_k = \alpha$, for all $k\in \mathbb{N}$, we see that the inequality in (\ref{sys}) implies that $w_k-w^\star$ converges to some $w - w^\star$ where
\[
\|w-w^\star\| \le \frac{\mu \alpha}{1-\mu \alpha}\frac{\rho f_{\textup{max}}}{\lambda}\|w^\star\|  			
\]
Similarly, for $\alpha_k = \frac{1}{k}$,  we have that $\|w_k - w^\star\|$ is the state $ \Delta_{k}$ of a stable dynamical system given by
\[
\Delta_{k+1} \le (1-\mu\alpha_k)\cdot \Delta_{k} + \alpha_k \frac{\rho f_{\textup{max}}}{\lambda}\|w^\star\| 
\]
More explicitely, we have that
\begin{equation}
	\begin{aligned}
		\Delta_K \le  \prod_{k=1}^{K} (1-\mu \alpha_k)\Delta_0 + \sum_{k=1}^K \prod_{i=k+1}^{K} (1-\mu \alpha_i)\alpha_k \frac{\rho f_{\textup{max}}}{\lambda}\|w^\star\| 
	\end{aligned}
\end{equation}
Note that
\begin{equation}
	\begin{aligned}
		\prod_{k=1}^K (1 - \mu\alpha_k) 
		& \le \prod_{k=1}^K \exp(- \mu\alpha_k)\\
		&= \exp(- \mu \sum_{k=1}^nK\alpha_k)\\
		&\le \exp(- \mu \ln{K})\\
		&= \frac{1}{K^\mu}
	\end{aligned}
\end{equation}
which goes to zero as $K\rightarrow \infty$.
Similarly, for $k > 0$, we have
\begin{equation}
	\begin{aligned}
		\prod_{i=k+1}^K (1 - \mu\alpha_i) 
		& \le \prod_{i=k+1}^K \exp(- \mu\alpha_i)\\
		&= \exp \left(- \mu \sum_{i=k+1}^K \alpha_i\right)\\
		&\le \exp \left(- \mu (\ln{n} - \ln k  - 1)\right)\\
		&= \left(\frac{ek}{K}\right)^\mu
	\end{aligned}
\end{equation}
Thus,
\begin{equation}
	\begin{aligned}
		\sum_{k=1}^{K}\prod_{i=k+1}^K (1 - \mu\alpha_i)\alpha_k  
		& \le \sum_{k=1}^K \left(\frac{ek}{K}\right)^\mu \frac{1}{k}\\
		& = \left(\frac{e}{K}\right)^\mu \sum_{k=1}^K \left(\frac{1}{k}\right)^{1-\mu}\\
		&\le \left(\frac{e}{K}\right)^\mu \int_{1}^{K+1} x^{\mu-1} dx \\
		&= \left(\frac{e}{K}\right)^\mu \frac{1}{\mu} \left( (K+1)^{\mu} - 1\right) \\
	\end{aligned}
\end{equation}
Since
\[
\lim_{K\rightarrow \infty} \frac{e^\mu}{\mu} \frac{1}{K^\mu} \left( (K+1)^{\mu} - 1\right)  =  \frac{e^\mu}{\mu},
\]
we get the inequality 
\begin{equation}
	\begin{aligned}
		\sum_{k=1}^{\infty}\prod_{i=k+1}^\infty (1 - \mu\alpha_i)\alpha_k  
		& \le  \frac{e^\mu}{\mu}.
	\end{aligned}
\end{equation}

We conclude that
\[
\|w - w^\star\| = \Delta_{\infty} \le C\cdot {\rho}
\] 
with $$C = \frac{e^\mu}{\mu}\frac{f_{\textup{max}}}{\lambda }\|w^\star\|.$$ 

This completes the proof.

\end{proof}

	\section{Conclusions}
	We have considered the problem of reinforcement learning for optimal control of stable nonlinear systems in an episodic setting, where we at each episode  approximate the $Q$-function with a two-layer neural network for which the optimal parameters per episode are found by using convex optimization. We show that as the number of episodes goes to infinity, the algorithm converges to neural network parameters given by $w$ such that the distance to the optimal network parameters $w^\star$ is bounded according to the inequality
	\[
	\|w - w^\star\| \le C{\rho},
	\]  
	for some constant $C$. In particular, we can see that as regularization parameter $\rho$ decreases, the converging neural network parameters get arbitrarily close to the optimal ones. As a consequence, our algorithm converges fast due to the polynomial-time convergence of convex optimization algorithms.
	
	Future work includes applications to reinforcement learning for mixed continuous and discrete state and action spaces. It would also be interesting to apply the algorithm to fine-tuning Large Language Models, since the training would be fast and computationally very efficient, and convergence to the optimal neural network parameters is guaranteed.   
	\bibliographystyle{icml2023}
	\bibliography{../../../ref/mybib}

\end{document}